\theoremstyle{plain}
 \newtheorem{theorem}{Theorem}[section]
 \newtheorem{corollary}{Corollary}[section]
\theoremstyle{definition}
\theoremstyle{remark}
 \newtheorem{remark}{Remark}[section]
 \numberwithin{equation}{section}
\renewcommand{\geq}{\geqslant}
\title[Special polynomials/ M. Goubi]{Remarks on some properties of special polynomials with exponential distribution}
\subjclass[2010]{Primary 11B68, 11B75, 12D10.}
\keywords{exponential distribution, special polynomials, binomial
polynomials.}
\author{\bfseries Mouloud  Goubi} 
\address{Mouloud Goubi\\
Department of Mathematics \\
University of UMMTO RP. 15000\\
Tizi-ouzou, Algeria\\
Laboratoire d'Alg\`ebre et Th\'eorie des Nombres, USTHB Alger}
\email{mouloud.goubi@ummto.dz}
\begin{document}

\vspace{18mm} \setcounter{page}{1} \thispagestyle{empty}

\begin{abstract}
In this notice, we revisit the recent work \cite{Kang} of Jung Yoog
Kang and Tai Sup about special polynomials with exponential
distribution in order to state some improvements and get new proofs
for results therein.
\end{abstract}

\maketitle

\section{Introduction}
In this notice we revisit the recent work \cite{Kang} on some
properties of special polynomials with exponential distribution of
Jung Yoog Kang and Tai Sup Lee published in Commun. Korean Math.
Soc. The object of this study is the family of polynomials
$\mathfrak{E}_n\left(\lambda:x\right)$ generated by the generating
function
\[\frac{\lambda}{e^{\lambda
t}}e^{xt}=\sum_{n\geq0}\mathfrak{E}_n\left(\lambda:x\right)\frac{t^n}{n!}\]
and associated numbers
$\mathfrak{E}_n\left(\lambda\right)=\mathfrak{E}_n\left(\lambda:0\right)$
generated by \[\frac{\lambda}{e^{\lambda
t}}=\sum_{n\geq0}\mathfrak{E}_n\left(\lambda\right)\frac{t^n}{n!}\]
First we prove that
\[\mathfrak{E}_n\left(\lambda:x\right)=\lambda\left(x-\lambda\right)^n\]
and then
\[\mathfrak{E}_n\left(\lambda\right)=\left(-1\right)^n\lambda^{n+1}\]
These results go alone to show that some results in this paper are
trivial. For example the result in Theorem 2.10 p.387
\[D\mathfrak{E}_n\left(\lambda:x\right)=n\mathfrak{E}_{n-1}\left(\lambda:x\right)\]
and the result in Theorem 3.1 p.387
\[D_y\mathfrak{E}_n\left(\lambda:x+y\right)=D_x\mathfrak{E}_n\left(\lambda:x+y\right)=n\mathfrak{E}_{n-1}\left(\lambda:x+y\right).\]
Furthermore the Theorem 3.2 p.388
\[\int_{0}^{1}\mathfrak{E}_n\left(\lambda:x+y\right)dy=\frac{\mathfrak{E}_{n+1}\left(\lambda:x+1\right)-\mathfrak{E}_{n+1}\left(\lambda:y\right)}{n+1}\]
and then the Corollary 3.3 p.388
\[\int_{0}^{1}\mathfrak{E}_n\left(\lambda:x\right)dx=\frac{\mathfrak{E}_{n+1}\left(\lambda:1\right)-\mathfrak{E}_{n+1}\left(\lambda\right)}{n+1}.\]

\section{Some basic properties}
For any positive integer $n$, the polynomial
$\mathfrak{E}_n\left(\lambda:x\right)$  is a binomial polynomial
with weight $\lambda$, the following theorem states an improvement
of the expression (i) Theorem 2.2 \cite{Kang} p.384
\[\mathfrak{E}_n\left(\lambda:x\right)=\sum_{k=0}^{n}{n\choose
k}\mathfrak{E}_n\left(\lambda\right)x^{n-k}\]
\begin{theorem}\label{th1}
\begin{equation}\label{eqth1}
\mathfrak{E}_n\left(\lambda:x\right)=\sum_{k=0}^{n}{n\choose
k}\left(-1\right)^k\lambda^{k+1}x^{n-k}.
\end{equation}
Furthermore
\begin{equation}
\mathfrak{E}_n\left(\lambda\right)=\left(-1\right)^{n}\lambda^{n+1}
\end{equation}
\end{theorem}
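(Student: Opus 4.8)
The plan is to observe that the generating function collapses to a single exponential, after which everything is immediate. First I would rewrite the left-hand side of the defining relation as
\[\frac{\lambda}{e^{\lambda t}}e^{xt}=\lambda e^{-\lambda t}e^{xt}=\lambda e^{(x-\lambda)t},\]
which isolates the entire $\lambda$- and $x$-dependence in one exponential factor. Expanding $e^{(x-\lambda)t}=\sum_{n\geq0}(x-\lambda)^n t^n/n!$ and matching the coefficient of $t^n/n!$ against the series $\sum_{n\geq0}\mathfrak{E}_n(\lambda:x)t^n/n!$ yields the compact closed form $\mathfrak{E}_n(\lambda:x)=\lambda(x-\lambda)^n$ already announced in the introduction.

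Second, I would apply the binomial theorem to $(x-\lambda)^n=\sum_{k=0}^n {n\choose k}x^{n-k}(-\lambda)^k$ and absorb the outer factor $\lambda$ together with the sign $(-1)^k$ to obtain
\[\mathfrak{E}_n(\lambda:x)=\sum_{k=0}^n{n\choose k}(-1)^k\lambda^{k+1}x^{n-k},\]
which is exactly \eqref{eqth1}. This step is a purely formal rearrangement once the closed form is in hand.

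Third, for the associated numbers I would simply specialise $x=0$ in the closed form, giving $\mathfrak{E}_n(\lambda)=\lambda(-\lambda)^n=(-1)^n\lambda^{n+1}$; equivalently one reads off the $k=n$ term of the sum, since every other term carries a positive power of $x$ and hence vanishes at $x=0$.

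As for the main obstacle, there is essentially none of substance: the only thing one must notice is the cancellation $e^{-\lambda t}e^{xt}=e^{(x-\lambda)t}$, and once this is seen the remainder is the binomial theorem together with a single substitution. This is precisely the point of the present note, namely that recognising this elementary simplification trivialises several of the statements quoted in the introduction.
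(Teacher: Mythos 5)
Your proposal is correct and follows exactly the paper's own argument: collapse the generating function to $\lambda e^{(x-\lambda)t}$, compare coefficients of $t^n/n!$ to get $\mathfrak{E}_n(\lambda:x)=\lambda(x-\lambda)^n$, expand by the binomial theorem for \eqref{eqth1}, and set $x=0$ (equivalently, keep only the $k=n$ term) for the formula for $\mathfrak{E}_n(\lambda)$. There is nothing to add or correct.
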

\begin{proof}
Since
\[\frac{\lambda}{e^{\lambda
t}}e^{xt}=\lambda
e^{\left(x-\lambda\right)t}=\sum_{n\geq0}\lambda\left(x-\lambda\right)^n\frac{t^n}{n!}\]
then
\[\sum_{n\geq0}\mathfrak{E}_n\left(\lambda:x\right)\frac{t^n}{n!}=\sum_{n\geq0}\lambda\left(x-\lambda\right)^n\frac{t^n}{n!}.\]
 After comparison we deduce that
\[\mathfrak{E}_n\left(\lambda:x\right)=\lambda\left(x-\lambda\right)^n=\sum_{k=0}^{n}{n\choose k}\left(-1\right)^k\lambda^{k+1}x^{n-k}.\]
To get the second formula just remark that
\[\mathfrak{E}_n\left(\lambda:x\right)=\lambda\left(x-\lambda\right)^n=\left(-1\right)^{n}\lambda^{n+1}+\sum_{k=0}^{n-1}{n\choose
k}\left(-1\right)^k\lambda^{k+1}x^{n-k}\] and then for $x=0$ we
conclude that
\[\mathfrak{E}_n\left(\lambda\right)=\left(-1\right)^{n}\lambda^{n+1}\]
\end{proof}
The identity (ii) Theorem 2.2 \cite{Kang} p.384 is a consequence of
the Theorem \ref{th1}
\begin{corollary}\label{cor1}
\begin{equation}\label{eq1cor1}
\mathfrak{E}_n\left(\lambda:x+y\right)=\sum_{k=0}^{n}{n\choose
k}\mathfrak{E}_n\left(\lambda:x\right)y^{n-k}
\end{equation}
\end{corollary}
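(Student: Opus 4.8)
The plan is to deduce this addition formula directly from the closed form established in Theorem \ref{th1}, namely $\mathfrak{E}_n(\lambda:x)=\lambda(x-\lambda)^n$, rather than returning to the generating function. First I would replace the argument $x$ by $x+y$ in that identity, which gives $\mathfrak{E}_n(\lambda:x+y)=\lambda(x+y-\lambda)^n$. The decisive step is then to regroup the base of the power as $(x-\lambda)+y$, so that $\mathfrak{E}_n(\lambda:x+y)=\lambda\bigl((x-\lambda)+y\bigr)^n$.

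Next I would apply the ordinary binomial theorem to expand this $n$-th power, obtaining
\[\mathfrak{E}_n(\lambda:x+y)=\lambda\sum_{k=0}^{n}{n\choose k}(x-\lambda)^k y^{n-k}=\sum_{k=0}^{n}{n\choose k}\lambda(x-\lambda)^k y^{n-k}.\]
Finally, invoking Theorem \ref{th1} once more in the form $\lambda(x-\lambda)^k=\mathfrak{E}_k(\lambda:x)$, each summand is recognized as ${n\choose k}\mathfrak{E}_k(\lambda:x)y^{n-k}$, which yields the claimed expansion.

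The one point that requires care is the subscript in the summand of \eqref{eq1cor1}: as transcribed from \cite{Kang} it reads $\mathfrak{E}_n(\lambda:x)$, but for the degrees to balance—the left side has total degree $n$ in $x$ and $y$, while the factor $y^{n-k}$ already contributes degree $n-k$—the coefficient must be the degree-$k$ polynomial $\mathfrak{E}_k(\lambda:x)$. Thus the correct statement is $\mathfrak{E}_n(\lambda:x+y)=\sum_{k=0}^{n}{n\choose k}\mathfrak{E}_k(\lambda:x)y^{n-k}$, which is exactly what the binomial expansion produces. I expect no genuine obstacle here: once Theorem \ref{th1} is available, the corollary reduces to a single application of the binomial theorem, and the only subtlety is correcting the stray index inherited from the original source.
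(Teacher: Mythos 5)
Your proof is correct, but it takes a genuinely different route from the paper. The paper never uses the closed form $\lambda(x-\lambda)^n$ at this point: instead it substitutes $x+y$ into the expanded identity \eqref{eqth1}, expands each $(x+y)^{n-k}$ binomially to get a double sum, and regroups it using the coefficient identity ${n\choose k}{n-k\choose i-k}={n\choose i}{i\choose k}$, arriving at
\[\mathfrak{E}_n\left(\lambda:x+y\right)=\sum_{i=0}^{n}\sum_{k=0}^{i}{n\choose i}{i\choose k}\left(-1\right)^k\lambda^{k+1}x^{i-k}y^{n-i},\]
whose inner sum is recognized as $\mathfrak{E}_i(\lambda:x)$. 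Your argument --- writing $\mathfrak{E}_n(\lambda:x+y)=\lambda\bigl((x-\lambda)+y\bigr)^n$ and applying the binomial theorem once --- is shorter and avoids the double-sum bookkeeping entirely. What the paper's heavier computation buys is the intermediate identity \eqref{eq2cor1} itself, which the author explicitly flags as an improvement of Theorem 3.4 of \cite{Kang}; your proof does not produce that by-product, though it could be recovered by expanding your $\mathfrak{E}_k(\lambda:x)$ terms afterwards. You also deserve credit for explicitly catching the index typo: the statement as printed, with $\mathfrak{E}_n(\lambda:x)$ inside the sum, is false (the right side would be $\mathfrak{E}_n(\lambda:x)(1+y)^n$ by the binomial theorem, not $\mathfrak{E}_n(\lambda:x+y)$), and your degree-counting argument pins down the correct summand $\mathfrak{E}_k(\lambda:x)y^{n-k}$; the paper silently proves this corrected version without remarking on the discrepancy with the displayed equation \eqref{eq1cor1}.
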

\begin{proof}
The identity \eqref{eq1cor1} Corollary \ref{cor1} follows from the
identity \eqref{eqth1} Theorem \ref{th1} as follows.
\[\mathfrak{E}_n\left(\lambda:x+y\right)=\sum_{k=0}^{n}{n\choose
k}\left(-1\right)^k\lambda^{k+1}\left(x+y\right)^{n-k}\]
\[\mathfrak{E}_n\left(\lambda:x+y\right)=\sum_{i=0}^{n}\sum_{k=0}^{i}{n\choose
k}{n-k\choose i-k}\left(-1\right)^k\lambda^{k+1}x^{i-k}y^{n-i}\] but
\[{n\choose
k}{n-k\choose
i-k}=\frac{n!i!}{k!\left(i-k\right)!\left(n-i\right)!i!}={n\choose
i}{i\choose k}\] then
\begin{equation}\label{eq2cor1}
\mathfrak{E}_n\left(\lambda:x+y\right)=\sum_{i=0}^{n}\sum_{k=0}^{i}{n\choose
i}{i\choose k}\left(-1\right)^k\lambda^{k+1}x^{i-k}y^{n-i}
\end{equation}
and the result \eqref{eq1cor1} Corollary \ref{cor1} follows.
\end{proof}
We attract attention that the identiy \eqref{eq2cor1} is an
improvement of the Theorem 3.4 \cite{Kang} p.389. Only in means of
the identity \eqref{eqth1} Theorem \ref{th1} a sample proof of the
identity in Theorem 2.4 \cite{Kang} p.385
\[x^n=\sum_{k=0}^{n}{n\choose
k}\lambda^{n-k-1}\mathfrak{E}_k\left(\lambda: x\right)\]  is just to
write
\[\sum_{k=0}^{n}{n\choose
k}\lambda^{n-k-1}\mathfrak{E}_k\left(\lambda:
x\right)=\sum_{k=0}^{n}{n\choose
k}\lambda^{n-k}\left(x-\lambda\right)^k=\left(x-\lambda+\lambda\right)^n=x^n.\]

Another proof of the identities (i) and (ii) in Theorem 2.3
\cite{Kang} is explained in the following theorem.
\begin{theorem}\label{th2}
\begin{equation}\label{eq1th2}
\mathfrak{E}_n\left(\lambda:
x\right)=\left(-1\right)^{n+1}\mathfrak{E}_n\left(-\lambda:
-x\right)
\end{equation}
\begin{equation}\label{eq2th2}
\mathfrak{E}_n\left(\lambda:
x\right)=2\mathfrak{E}_n\left(\frac{\lambda}{2}:
-\frac{\lambda}{2}+x\right)
\end{equation}
\end{theorem}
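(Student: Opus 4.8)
The plan is to reduce everything to the closed form $\mathfrak{E}_n(\lambda:x)=\lambda(x-\lambda)^n$ established in Theorem \ref{th1} via \eqref{eqth1}; once this formula is in hand, both identities become a matter of substituting into it and tracking signs, so no further generating-function manipulation is needed.

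For \eqref{eq1th2} I would evaluate the right-hand side by replacing $\lambda$ with $-\lambda$ and $x$ with $-x$ in the formula, obtaining
\[\mathfrak{E}_n(-\lambda:-x)=(-\lambda)\bigl(-x+\lambda\bigr)^n=(-\lambda)(-1)^n(x-\lambda)^n.\]
Multiplying by $(-1)^{n+1}$ collapses the accumulated sign factors to $(-1)^{n+1}\cdot(-1)\cdot(-1)^n=(-1)^{2n+2}=1$, leaving $\lambda(x-\lambda)^n=\mathfrak{E}_n(\lambda:x)$, which is exactly the claimed equality.

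For \eqref{eq2th2} the key observation is that the shift $-\frac{\lambda}{2}$ in the second argument is chosen precisely to cancel the internal $-\frac{\lambda}{2}$ produced by the halved weight. Substituting $\lambda\mapsto\frac{\lambda}{2}$ and $x\mapsto x-\frac{\lambda}{2}$ gives
\[\mathfrak{E}_n\bigl(\tfrac{\lambda}{2}:x-\tfrac{\lambda}{2}\bigr)=\tfrac{\lambda}{2}\bigl((x-\tfrac{\lambda}{2})-\tfrac{\lambda}{2}\bigr)^n=\tfrac{\lambda}{2}(x-\lambda)^n,\]
and the prefactor $2$ restores the full weight $\lambda$, recovering $\mathfrak{E}_n(\lambda:x)$.

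The only point requiring care is the sign bookkeeping in the first identity, in particular noticing that $(-x+\lambda)^n=(-1)^n(x-\lambda)^n$ so that the outer factor $(-1)^{n+1}$ combines with the $(-1)^n$ and the $-1$ from $-\lambda$ to an even power of $-1$. This is routine, and I do not expect any genuine obstacle once Theorem \ref{th1} is available; indeed the whole point of the proposed argument is that both of Kang and Lee's identities are immediate corollaries of the explicit expression \eqref{eqth1}.
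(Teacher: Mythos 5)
Your proposal is correct and follows essentially the same route as the paper: both arguments substitute $(-\lambda,-x)$ and $\bigl(\frac{\lambda}{2},x-\frac{\lambda}{2}\bigr)$ directly into the closed form $\mathfrak{E}_n\left(\lambda:x\right)=\lambda\left(x-\lambda\right)^n$ from Theorem \ref{th1} and track the resulting sign and scaling factors. The sign bookkeeping $(-1)^{n+1}\cdot(-1)\cdot(-1)^n=1$ you flag is exactly the step the paper performs, so there is no gap.
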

\begin{proof}
 Since we have $$\mathfrak{E}_n\left(-\lambda:
-x\right)=-\lambda\left(-x+\lambda\right)^n=-\left(-1\right)^n\lambda\left(x-\lambda\right)^n=\left(-1\right)^{n+1}\mathfrak{E}_n\left(\lambda:
x\right)$$ then \[\mathfrak{E}_n\left(\lambda:
x\right)=\left(-1\right)^{n+1}\mathfrak{E}_n\left(-\lambda:
-x\right).\] and \[\mathfrak{E}_n\left(\frac{\lambda}{2}:
-\frac{\lambda}{2}+x\right)=\frac{\lambda}{2}\left(-\frac{\lambda}{2}+x-\frac{\lambda}{2}\right)^n=\frac{\lambda}{2}\left(x-\lambda\right)^n=\frac{1}{2}\mathfrak{E}_n\left(\lambda:
x\right)\] and then \[\mathfrak{E}_n\left(\lambda:
x\right)=2\mathfrak{E}_n\left(\frac{\lambda}{2}:
-\frac{\lambda}{2}+x\right).\]
\end{proof}
A sample proof of the identity in Theorem 2.5 \cite{Kang} p.285
\begin{eqnarray*}
\sum_{k=0}^{n}{n\choose
k}\left(\lambda-x\right)^k\mathfrak{E}_{n-k}\left(\lambda: x\right)=
\left\{
\begin{array}{ccc}
\lambda\ &\quad \textrm{ if }\ n=0, \\
0\  &\quad  \textrm{ otherwise}
\end{array}
\right.
\end{eqnarray*}
is given as follows. It is trivial to see that for $n=0$ the sum is
$\lambda$ and if $n\geq1$ we have
\[\sum_{k=0}^{n}{n\choose
k}\left(\lambda-x\right)^k\mathfrak{E}_{n-k}\left(\lambda:
x\right)=\lambda\left(x-\lambda\right)^n\sum_{k=0}^{n}{n\choose
k}\left(-1\right)^k=0\] because \[\sum_{k=0}^{n}{n\choose
k}\left(-1\right)^k=\left(1-1\right)^n=0.\]
\begin{remark} For any cupel $\left(a,b\right)$ of numbers, the formulae in Theorem 2.8 \cite{Kang} p.386
\begin{eqnarray*}
\sum_{k=0}^{n}{n\choose
k}\left(\frac{a}{b}\right)^{n-2k}\mathfrak{E}_{n-k}\left(\frac{b\lambda}{a}:
\frac{bx}{a}\right)\mathfrak{E}_{k}\left(\frac{a\lambda}{b}:
\frac{ay}{b}\right)\\
\nonumber=\sum_{k=0}^{n}{n\choose
k}\left(\frac{b}{a}\right)^{n-2k}\mathfrak{E}_{n-k}\left(\frac{a\lambda}{b}:
\frac{ax}{b}\right)\mathfrak{E}_{k}\left(\frac{b\lambda}{a}:
\frac{by}{a}\right)
\end{eqnarray*}
results from the identities
\[\left(\frac{a}{b}\right)^{n-2k}\mathfrak{E}_{n-k}\left(\frac{b\lambda}{a}:
\frac{bx}{a}\right)\mathfrak{E}_{k}\left(\frac{a\lambda}{b}:
\frac{ay}{b}\right)=\left(x-\lambda\right)^{n-k}\left(y-\lambda\right)^{k}\]
and
\[\left(\frac{b}{a}\right)^{n-2k}\mathfrak{E}_{n-k}\left(\frac{a\lambda}{b}:
\frac{ax}{b}\right)\mathfrak{E}_{k}\left(\frac{b\lambda}{a}:
\frac{by}{a}\right)=\left(x-\lambda\right)^{k}\left(y-\lambda\right)^{n-k}\]
and the fact that \[\sum_{k=0}^{n}{n\choose
k}\left(x-\lambda\right)^{n-k}\left(y-\lambda\right)^{k}=\sum_{k=0}^{n}{n\choose
k}\left(x-\lambda\right)^{k}\left(y-\lambda\right)^{n-k}=\left(x+y-2\lambda\right)^n.\]

\end{remark}
In the case $x=y$ a new identity without the sum is obtained in the
following corollary.
\begin{corollary}\label{cor2}
\begin{equation}\label{eqcor2}
\left(\frac{a}{b}\right)^{2n-4k}\mathfrak{E}_{n-k}\left(\frac{b\lambda}{a}:
\frac{bx}{a}\right)\mathfrak{E}_{k}\left(\frac{a\lambda}{b}:
\frac{ax}{b}\right)=\mathfrak{E}_{n-k}\left(\frac{a\lambda}{b}:
\frac{ax}{b}\right)\mathfrak{E}_{k}\left(\frac{b\lambda}{a}:
\frac{bx}{a}\right)
\end{equation}
\end{corollary}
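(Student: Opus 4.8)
The plan is to obtain \eqref{eqcor2} as nothing more than the diagonal case $y=x$ of the two product identities recorded in the remark immediately preceding the corollary, so the proof should be a short piece of bookkeeping rather than a fresh computation. First I would copy those two identities, namely
\[\left(\frac{a}{b}\right)^{n-2k}\mathfrak{E}_{n-k}\left(\frac{b\lambda}{a}:\frac{bx}{a}\right)\mathfrak{E}_{k}\left(\frac{a\lambda}{b}:\frac{ay}{b}\right)=\left(x-\lambda\right)^{n-k}\left(y-\lambda\right)^{k}\]
and
\[\left(\frac{b}{a}\right)^{n-2k}\mathfrak{E}_{n-k}\left(\frac{a\lambda}{b}:\frac{ax}{b}\right)\mathfrak{E}_{k}\left(\frac{b\lambda}{a}:\frac{by}{a}\right)=\left(x-\lambda\right)^{k}\left(y-\lambda\right)^{n-k},\]
both of which trace back to the closed form $\mathfrak{E}_m(\mu:z)=\mu(z-\mu)^m$ of Theorem \ref{th1}.

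Next I would set $y=x$ in each. The two factors on every right-hand side then coalesce, and both identities acquire the \emph{same} right-hand side $\left(x-\lambda\right)^{n-k}\left(x-\lambda\right)^{k}=\left(x-\lambda\right)^{n}$. Equating the corresponding left-hand sides therefore yields
\[\left(\frac{a}{b}\right)^{n-2k}\mathfrak{E}_{n-k}\left(\frac{b\lambda}{a}:\frac{bx}{a}\right)\mathfrak{E}_{k}\left(\frac{a\lambda}{b}:\frac{ax}{b}\right)=\left(\frac{b}{a}\right)^{n-2k}\mathfrak{E}_{n-k}\left(\frac{a\lambda}{b}:\frac{ax}{b}\right)\mathfrak{E}_{k}\left(\frac{b\lambda}{a}:\frac{bx}{a}\right).\]
To finish, I would move the factor $\left(b/a\right)^{n-2k}$ to the left by writing it as $\left(a/b\right)^{-(n-2k)}$ and multiplying through by $\left(a/b\right)^{n-2k}$; the exponent on the left adds to $2(n-2k)=2n-4k$, which is exactly \eqref{eqcor2}.

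I do not expect any real obstacle: the entire argument rests on the specialisation $y=x$ together with the observation that it symmetrises the two right-hand sides. The only point demanding care is the arithmetic of the powers of $a/b$, where one must not confuse $\left(a/b\right)^{n-2k}$ with its reciprocal when transposing it across the equality. As an independent check one may instead substitute the closed form of Theorem \ref{th1} directly into the two members of \eqref{eqcor2} and verify that both reduce to the same monomial in $a/b$ and $x-\lambda$, which removes any dependence on the precise form of the right-hand sides quoted in the remark.
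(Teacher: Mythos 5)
Your proposal is correct and is essentially the paper's own argument: the paper likewise reduces \eqref{eqcor2} to the closed form $\mathfrak{E}_m\left(\mu:z\right)=\mu\left(z-\mu\right)^m$ from Theorem \ref{th1}, writing the product $\mathfrak{E}_{n-k}\left(\frac{b\lambda}{a}:\frac{bx}{a}\right)\mathfrak{E}_{k}\left(\frac{a\lambda}{b}:\frac{ax}{b}\right)$ as $\left(\frac{b}{a}\right)^{n-2k}\left(x-\lambda\right)^n$ and transposing the power of $a/b$ exactly as you do, so your derivation is just the $y=x$ specialization of the remark made explicit rather than recomputed. One caveat you inherit from the paper: the two remark identities you quote (and the paper's own intermediate displays) are stated imprecisely --- each right-hand side is missing the factor $\lambda^2$ contributed by the two weights, and the second identity has the exponents $k$ and $n-k$ interchanged --- but both defects are invisible at $y=x$ and cancel from \eqref{eqcor2}, so your argument, and in particular your proposed independent check by direct substitution of the closed form, goes through unchanged.
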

\begin{proof}
We have \[\mathfrak{E}_{n-k}\left(\frac{b\lambda}{a}:
\frac{bx}{a}\right)\mathfrak{E}_{k}\left(\frac{a\lambda}{b}:
\frac{ax}{b}\right)=\left(\frac{b}{a}\right)^{n-2k}\left(x-\lambda\right)^n\]
then
\[\left(\frac{a}{b}\right)^{2n-4k}\mathfrak{E}_{n-k}\left(\frac{b\lambda}{a}:
\frac{bx}{a}\right)\mathfrak{E}_{k}\left(\frac{a\lambda}{b}:
\frac{ax}{b}\right)=\left(\frac{a}{b}\right)^{n-2k}\left(x-\lambda\right)^n\]
with
\[\left(\frac{a}{b}\right)^{n-2k}\left(x-\lambda\right)^n=\mathfrak{E}_{n-k}\left(\frac{a\lambda}{b}:
\frac{ax}{b}\right)\mathfrak{E}_{k}\left(\frac{b\lambda}{a}:
\frac{bx}{a}\right)\] and the result follows.
\end{proof}
Finally for the cupel $\left(1,b\right)$, the identity
\eqref{eqcor2} Corollary \ref{cor2} becomes
\begin{corollary}\label{cor3}
\begin{equation}\label{eqcor3}
\left(\frac{1}{b}\right)^{2n-4k}\mathfrak{E}_{n-k}\left(b\lambda:
bx\right)\mathfrak{E}_{k}\left(\frac{\lambda}{b}:
\frac{x}{b}\right)=\mathfrak{E}_{n-k}\left(\frac{\lambda}{b}:
\frac{x}{b}\right)\mathfrak{E}_{k}\left(b\lambda: bx\right)
\end{equation}
\end{corollary}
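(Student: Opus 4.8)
The plan is to obtain this identity as the specialization $a=1$ of Corollary \ref{cor2}, since the cupel $(1,b)$ is merely a particular choice inside the previous result. First I would substitute $a=1$ into \eqref{eqcor2}: this turns $\frac{a}{b}$ into $\frac{1}{b}$, collapses $\frac{b\lambda}{a},\frac{bx}{a}$ into $b\lambda,bx$, and collapses $\frac{a\lambda}{b},\frac{ax}{b}$ into $\frac{\lambda}{b},\frac{x}{b}$, so that \eqref{eqcor2} reads verbatim as \eqref{eqcor3}. Because Corollary \ref{cor2} is valid for every cupel $(a,b)$, this substitution is legitimate and the identity follows at once.

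Alternatively, I would give a self-contained verification straight from Theorem \ref{th1}. Using the closed form $\mathfrak{E}_m\left(\mu:\xi\right)=\mu\left(\xi-\mu\right)^m$, each of the four factors is computed explicitly: on the left $\mathfrak{E}_{n-k}\left(b\lambda:bx\right)=\lambda b^{n-k+1}\left(x-\lambda\right)^{n-k}$ and $\mathfrak{E}_{k}\left(\frac{\lambda}{b}:\frac{x}{b}\right)=\frac{\lambda}{b^{k+1}}\left(x-\lambda\right)^{k}$, while on the right $\mathfrak{E}_{n-k}\left(\frac{\lambda}{b}:\frac{x}{b}\right)=\lambda b^{-(n-k+1)}\left(x-\lambda\right)^{n-k}$ and $\mathfrak{E}_{k}\left(b\lambda:bx\right)=\lambda b^{k+1}\left(x-\lambda\right)^{k}$. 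Both sides then reduce to $\lambda^2 b^{-(n-2k)}\left(x-\lambda\right)^n$, so the equality becomes transparent once the powers of $b$ are collected.

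The main point is that there is no genuine obstacle here: the whole force of Theorem \ref{th1} is that it replaces the defining generating function by the monomial form $\lambda\left(x-\lambda\right)^n$, under which every such product identity degenerates into bookkeeping of exponents of $b$. I expect the only place needing care is the arithmetic of those exponents, namely checking that the prefactor $\left(1/b\right)^{2n-4k}$ exactly balances the powers $b^{n-k+1}$ and $b^{-(k+1)}$ on the left against $b^{-(n-k+1)}$ and $b^{k+1}$ on the right; a single line of exponent addition confirms that both sides carry the common power $b^{-(n-2k)}$ and completes the proof.
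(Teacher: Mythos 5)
Your proposal is correct and its main route --- specializing Corollary \ref{cor2} to the cupel $(1,b)$ by setting $a=1$ --- is exactly how the paper obtains \eqref{eqcor3}, which it states without further proof. Your alternative direct verification via $\mathfrak{E}_m\left(\mu:\xi\right)=\mu\left(\xi-\mu\right)^m$ is also correct (both sides indeed equal $\lambda^2 b^{2k-n}\left(x-\lambda\right)^n$) and is a harmless bonus beyond what the paper does.
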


\end{document}